\newtheorem{theorem}{Theorem}
\newtheorem{corollary}[theorem]{Corollary}
\newtheorem{proposition}[theorem]{Proposition}
\newtheorem{definition}[theorem]{Definition}
\newtheorem{remark}[theorem]{Remark}
\newtheorem{lemma}[theorem]{Lemma}
\begin{document}

\title{Existence and Uniqueness of Solution for a Fractional
Riemann--Liouville Initial Value Problem on Time Scales\thanks{This
is a preprint of a paper whose final and definite form is in \emph{Journal
of King Saud University (Science)}, DOI: 10.1016/j.jksus.2015.08.001. 
Paper submitted 18/May/2015; revised 31/July/2015; 
accepted for publication 03/Aug/2015.}}

\author{Nadia Benkhettou$^1$\\ \texttt{benkhettou$_{-}$na@yahoo.com}
\and Ahmed Hammoudi$^{2}$\\ \texttt{hymmed@hotmail.com}
\and Delfim F. M. Torres$^3$\thanks{Corresponding author.
Tel: +351 234370668; Fax: +351 234370066;  Email: delfim@ua.pt}\\
\texttt{delfim@ua.pt}}

\date{$^1$Laboratoire de Math\'{e}matiques,
Universit\'{e} de Sidi Bel-Abb\`{e}s,\\
B.P. 89, 22000 Sidi Bel-Abb\`{e}s, Algerie\\[0.3cm]
$^2$Laboratoire de Math\'{e}matiques, Universit\'{e} de Ain T\'{e}mouchentl,\\
B.P. 89, 46000 Ain T\'{e}mouchent, Algerie\\[0.3cm]
$^3$\text{Center for Research and Development in Mathematics and Applications (CIDMA)},\\
Department of Mathematics, University of Aveiro, 3810-193 Aveiro, Portugal}

\maketitle


\begin{abstract}
We introduce the concept of fractional derivative
of Riemann--Liouville on time scales.
Fundamental properties of the new operator are proved,
as well as an existence and uniqueness result for a fractional
initial value problem on an arbitrary time scale.

\bigskip

\noindent \textbf{Keywords:} fractional derivatives,
dynamic equations, initial value problems, time scales.

\bigskip

\noindent \textbf{2010 Mathematics Subject Classification:} 26A33, 34N05.
\end{abstract}


\section{Introduction}

Let $\mathbb{T}$ be a time scale, that is,
a closed subset of $\mathbb{R}$.
We consider the following initial value problem:
\begin{equation}
\label{eq1}
{_{t_{0}}^{\mathbb{T}}D}_{t}^{\alpha} y(t)=f(t,y(t)),
\quad t\in[t_{0}, t_{0}+a]=\mathcal{J}\subseteq\mathbb{T},
\quad 0< \alpha <1,
\end{equation}
\begin{equation}
\label{eq2}
{_{t_{0}}^{\mathbb{T}}I}_{t}^{1-\alpha}y(t_{0})= 0,
\end{equation}
where ${_{t_{0}}^{\mathbb{T}}D}_{t}^{\alpha}$ is the (left)
Riemann--Liouville fractional derivative
operator or order $\alpha$ defined on $\mathbb{T}$,
${_{t_{0}}^{\mathbb{T}}I}_{t}^{1-\alpha}$
the (left) Riemann--Liouville fractional integral
operator or order $1-\alpha$ defined on $\mathbb{T}$,
and function $f:\mathcal{J} \times\mathbb{T}\rightarrow \mathbb{R}$
is a right-dense continuous function. Our main results give necessary
and sufficient conditions for the existence and uniqueness of solution
to problem \eqref{eq1}--\eqref{eq2}.


\section{Preliminaries}
\label{sec:prelim}

In this section, we collect notations, definitions, and results,
which are needed in the sequel. We use $\mathcal{C}(\mathcal{J},\mathbb{R})$
for a Banach space of continuous functions $y$ with the norm
$\|y\|_{\infty}=\sup\left\{|y(t)| : t\in\mathcal{J}\right\}$,
where $\mathcal{J}$ is an interval. A time scale $ \mathbb{T}$ is an arbitrary
nonempty closed subset of $ \mathbb{R}$. The reader interested on the
calculus on time scales is referred to the books \cite{BP,BP1}.
For a survey, see \cite{ABRP}. Any time scale $\mathbb{T}$
is a complete metric space with the distance $d(t,s)=|t-s|$, $t,s\in\mathbb{T}$.
Consequently, according to the well-known theory of general metric spaces,
we have for $\mathbb{T}$ the fundamental concepts such as open balls (intervals),
neighborhoods of points, open sets, closed sets, compact sets, etc.
In particular, for a given number $\delta>0$, the $\delta$-neighborhood
$\mathrm{U}_{\delta}(t)$ of a given point $t\in\mathbb{T}$ is the set of all points
$s\in\mathbb{T}$ such that $d(t,s)< \delta$. We also have, for functions
$f:\mathbb{T}\rightarrow\mathbb{R}$, the concepts of limit,
continuity, and the properties of continuous functions on a general complete metric
space. Roughly speaking, the calculus on time scales begins by introducing
and investigating the concept of derivative for functions
$f :\mathbb{T}\rightarrow\mathbb{R}$. In the definition of derivative,
an important role is played by the so-called jump operators \cite{BP1}.

\begin{definition}
\label{def:jump:oper}
Let $\mathbb{T}$ be a time scale. For $t \in \mathbb{T}$
we define the forward jump operator
$\sigma:\mathbb{T}\rightarrow \mathbb{T}$ by
$\sigma(t):=\inf\{s \in\mathbb{T} : s > t\}$,
and the backward jump operator
$\rho:\mathbb{T}\rightarrow \mathbb{T}$ by
$\rho(t):=\sup\{s \in\mathbb{T} : s < t\}$.
\end{definition}

\begin{remark}
In Definition~\ref{def:jump:oper}, we put $\inf \emptyset =\sup \mathbb{T}$
(i.e., $\sigma(M)= M$ if $\mathbb{T}$ has a maximum $M$)
and $\sup \emptyset =\inf \mathbb{T}$ (i.e., $\rho(m)= m$
if $\mathbb{T}$ has a minimum $m$), where $\emptyset$ denotes
the empty set.
\end{remark}

If $\sigma(t) > t$, then we say that $t$ is right-scattered;
if $\rho(t) < t$, then $t$ is said to be left-scattered.
Points that are simultaneously right-scattered and left-scattered
are called isolated. If $t < \sup\mathbb{T}$ and $\sigma(t) = t$,
then $t$ is called right-dense; if $t >\inf \mathbb{T}$ and $\rho(t)= t$,
then $t$ is called left-dense. The graininess function
$\mu :\mathbb{T}\rightarrow [0,\infty)$ is defined by
$\mu(t) :=\sigma(t) - t$.

The derivative makes use of the set $\mathbb{T}^{\kappa}$,
which is derived from the time scale $\mathbb{T}$ as follows:
if $\mathbb{T}$ has a left-scattered maximum $M$, then
$\mathbb{T}^{\kappa}:=\mathbb{T} \setminus \{M\}$;
otherwise, $\mathbb{T}^{\kappa}:=\mathbb{T}$.

\begin{definition}[Delta derivative \cite{AB}]
Assume $f:\mathbb{T}\rightarrow \mathbb{R}$ and let
$t\in \mathbb{T}^{\kappa}$. We define
$$
f^{\Delta}(t):=\lim_{s\rightarrow t}\frac{f(\sigma(s))-f(t)}{\sigma(s)-t},
\quad t \neq \sigma(s),
$$
provided the limit exists. We call $f^{\Delta}(t)$ the delta derivative
(or Hilger derivative) of $f$ at $t$. Moreover, we say that $f$
is delta differentiable on $\mathbb{T}^{\kappa}$ provided
$f^{\Delta}(t)$ exists for all $t\in \mathbb{T}^{\kappa}$. The function
$f^{\Delta}:\mathbb{T}^{\kappa}\rightarrow \mathbb{R}$ is then called
the (delta) derivative of $f$ on $\mathbb{T}^{\kappa}$.
\end{definition}

\begin{definition}
A function $f:\mathbb{T}\rightarrow \mathbb{R}$ is called rd-continuous provided
it is continuous at right-dense points in $\mathbb{T} $ and its left-sided limits
exist (finite) at left-dense points in $\mathbb{T}$. The set of rd-continuous
functions $f:\mathbb{T}\rightarrow \mathbb{R}$ is denoted by $\mathcal{C}_{rd}$.
Similarly, a function $f:\mathbb{T}\rightarrow \mathbb{R}$ is called ld-continuous provided
it is continuous at left-dense points in $\mathbb{T} $ and its right-sided limits
exist (finite) at right-dense points in $\mathbb{T}$. The set of ld-continuous
functions $f:\mathbb{T}\rightarrow \mathbb{R}$ is denoted by $\mathcal{C}_{ld}$.
\end{definition}

\begin{definition}
Let $[a,b]$ denote a closed bounded interval in $\mathbb{T}$.
A function $F: [a,b]\rightarrow \mathbb{R}$ is called a delta antiderivative
of function $f: [a,b)\rightarrow \mathbb{R}$ provided $F$ is continuous
on $[a,b]$, delta differentiable on $[a,b)$, and $ F^{\Delta}(t)=f(t)$
for all $t\in [a,b)$. Then, we define the $\Delta$-integral of $f$
from $a$ to $b$ by
$$
\int_{a}^{b}f(t)\Delta t := F(b)-F(a).
$$
\end{definition}

\begin{proposition}[See \cite{AJ}]
\label{P1}
Suppose $\mathbb{T}$ is a time scale and
$f$ is an increasing continuous function
on the time-scale interval $[a,b]$.
If $F$ is the extension of $f$ to the real interval $[a,b]$ given by
\begin{equation*}
F(s) :=
\begin{cases}
f(s) & \textrm{ if } s \in\mathbb{T} , \\
f(t) & \textrm{ if } s \in (t,\sigma(t))\notin\mathbb{T},
\end{cases}
\end{equation*}
then
$$
\int_{a}^{b} f(t) \Delta t
\leq \int_{a}^{b} F(t)dt.
$$
\end{proposition}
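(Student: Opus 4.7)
The strategy is to decompose the real interval $[a,b]$ into the time-scale part $\mathbb{T}\cap[a,b]$ together with its complement. Since $\mathbb{T}$ is closed in $\mathbb{R}$, the set $[a,b]\setminus\mathbb{T}$ is open in $\mathbb{R}$ and hence is a (finite or countable) disjoint union of open intervals, each necessarily of the form $(t_k,\sigma(t_k))$ for some right-scattered point $t_k\in\mathbb{T}\cap[a,b]$.

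First I would observe that $F$ is monotone non-decreasing on $[a,b]$: it coincides with the increasing function $f$ on $\mathbb{T}$, and on each gap $(t_k,\sigma(t_k))$ it is constant with value $f(t_k)$, which is bounded above by $f(\sigma(t_k))$. A monotone bounded function is Riemann integrable, so $\int_a^b F(s)\,ds$ is well defined and, by countable additivity of Lebesgue measure,
\[
\int_a^b F(s)\,ds \;=\; \int_{\mathbb{T}\cap[a,b]} F(s)\,ds \;+\; \sum_k \mu(t_k)\,f(t_k),
\]
because each gap $(t_k,\sigma(t_k))$ contributes $f(t_k)\cdot(\sigma(t_k)-t_k)=\mu(t_k)\,f(t_k)$ since $F\equiv f(t_k)$ there.

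Second I would decompose $\int_a^b f(t)\,\Delta t$ in a parallel way. Standard additivity properties of the delta integral produce an atomic contribution $\mu(t_k)\,f(t_k)$ from each right-scattered $t_k$, and on every maximal subinterval of $\mathbb{T}$ that is a genuine real interval the delta integral of $f$ reduces to the ordinary Riemann integral of $f$. The ``jump'' parts on the two sides then match identically, and the inequality reduces to comparing the contribution of the dense portion of $\mathbb{T}$ on the delta side with $\int_{\mathbb{T}\cap[a,b]} F(s)\,ds$. Since $F\equiv f$ on $\mathbb{T}$ and $f$ is increasing, monotonicity delivers the desired bound.

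The main obstacle is handling time scales whose dense portion is not itself a union of nondegenerate intervals, for example a Cantor-like subset of $[a,b]$. There one must justify rigorously that accumulations of right-scattered points can be summed term-by-term and that no mass is double-counted at the endpoints of gaps. I would handle this by approximating $\mathbb{T}\cap[a,b]$ from the inside by finite unions of closed subintervals and isolated points, applying the elementary case to each approximation, and passing to the limit by monotone convergence using the monotonicity of $f$; alternatively one may invoke the Lebesgue--Stieltjes representation of the delta integral, which makes the matching of atomic and continuous parts immediate.
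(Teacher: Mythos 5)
The paper offers no proof of Proposition~\ref{P1} at all --- it is imported from \cite{AJ} --- so there is nothing in-text to compare your argument against; I assess it on its own. Your strategy is essentially correct and in fact establishes more than is claimed: once you decompose $[a,b]\setminus\mathbb{T}$ into the countably many gaps $(t_k,\sigma(t_k))$ and match the atomic contributions $\mu(t_k)f(t_k)$ on both sides, the two integrals are \emph{equal}, because the $\Delta$-measure on $[a,b)\cap\mathbb{T}$ decomposes as $\lambda|_{\mathbb{T}}+\sum_k \mu(t_k)\,\delta_{t_k}$ (Lebesgue measure on the right-dense part plus atoms at the right-scattered points), while $\int_a^b F\,dt$ decomposes as $\int_{[a,b]\cap\mathbb{T}}f\,d\lambda+\sum_k\mu(t_k)f(t_k)$ since the gaps carry the constant $f(t_k)$ and the right-scattered points form a countable, hence Lebesgue-null, set. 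Monotonicity of $f$ is only really needed to secure integrability of $F$; it is not what ``delivers the bound'' at the end, the identification of the dense parts is. Your second proposed repair --- invoking the Lebesgue(--Stieltjes) representation of the $\Delta$-integral, i.e.\ exactly the measure identity above (Guseinov; Cabada--Vivero) --- is the standard and correct way to close the argument for arbitrary time scales.

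Your first proposed repair, however, does not work and you should drop it. Approximating $\mathbb{T}\cap[a,b]$ from the inside by finite unions of closed subintervals of $\mathbb{T}$ and isolated points, and passing to the limit, fails precisely on the Cantor-like examples you are worried about: if $\mathbb{T}\cap[a,b]$ contains a fat Cantor set, its right-dense part has positive Lebesgue measure yet contains no nondegenerate interval of $\mathbb{T}$, so every such inner approximation consists of finitely many points and its $\Delta$-integral is a finite sum of atoms $\mu(t_k)f(t_k)$. These sums converge to $\sum_k\mu(t_k)f(t_k)$, which is strictly smaller than $\int_a^b f\,\Delta t$ whenever the dense part carries mass; the approximation therefore does not converge to the left-hand side, and bounding each approximant by $\int_a^b F\,dt$ proves nothing about $\int_a^b f\,\Delta t$. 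So the measure-theoretic route is not merely a convenience but the necessary ingredient; with it, your outline becomes a complete (and sharper) proof.
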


We also make use of the classical gamma and beta functions.

\begin{definition}[Gamma function]
For complex numbers with a positive real part,
the gamma function $\Gamma(t)$ is defined
by the following convergent improper integral:
$$
\Gamma(t) := \int_{0}^{\infty} x^{t-1} e^{-x}dx.
$$
\end{definition}

\begin{definition}[Beta function]
The beta function, also called the Euler integral of first kind,
is the special function $\mathrm{B}(x,y)$ defined by
$$
\mathrm{B}(x,y) := \int _{0}^{1}t^{x-1}(1-t)^{y-1}dt,
\quad x>0, \quad y>0.
$$
\end{definition}

\begin{remark}
The gamma function satisfies the following useful property:
$\Gamma(t+1)=t \Gamma(t)$.
The beta function can be expressed through the gamma function by
$\mathrm{B}(x,y)=\frac{\Gamma(x) \Gamma(y)}{\Gamma(x+y)}$.
\end{remark}


\section{Main Results}

We introduce a new notion of fractional derivative on time scales.
Before that, we define the fractional integral on a time scale $\mathbb{T}$.
This is in contrast with \cite{BBT,MyID:320,MyID:324}, where first a notion
of fractional differentiation on time scales is introduced and only after that,
with the help of such concept, the fraction integral is defined.

\begin{definition}[Fractional integral on time scales]
\label{def:FI}
Suppose $\mathbb{T}$ is a time scale, $[a,b]$ is an interval of $\mathbb{T}$,
and $h$ is an integrable function on $[a,b]$. Let $0 < \alpha <1$.
Then the (left) fractional integral of order $\alpha$ of $h$ is defined by
$$
{_{a}^{\mathbb{T}}I}_{t}^{\alpha}h(t)
:= \int_{a}^{t} \frac{(t-s)^{\alpha-1}}{\Gamma(\alpha)}h(s)\Delta s,
$$
where $\Gamma$ is the gamma function.
\end{definition}

\begin{definition}[Riemann--Liouville fractional derivative on time scales]
\label{d1}
Let $\mathbb{T}$ be a time scale, $t\in\mathbb{T}$, $0 < \alpha <1$,
and $h:\mathbb{T}\rightarrow \mathbb{R}$. The (left) Riemann--Liouville
fractional derivative of order $\alpha$ of $h$ is defined by
\begin{equation}
\label{eq3}
{_{a}^{\mathbb{T}}D}_{t}^{\alpha}h(t)
:=\frac{1}{\Gamma(1-\alpha)}\left(\int_{a}^{t}
(t-s)^{-\alpha}h(s)\Delta s\right)^{\Delta}.
\end{equation}
\end{definition}

\begin{remark}
If $\mathbb{T}=\mathbb{R}$, then Definition~\ref{d1}
gives the classical (left) Riemann--Liouville fractional derivative
\cite{book:Podlubny}. For different extensions of the fractional derivative
to time scales, using the Caputo approach instead of the
Riemann--Liouville, see \cite{AJ,MR2800417}.
For local approaches to fractional calculus on time scales
we refer the reader to \cite{BBT,MyID:320,MyID:324}.
Here we are only considering left operators.
The corresponding right operators are easily obtained by changing the limits
of integration in Definitions~\ref{def:FI} and \ref{d1} from
$a$ to $t$ (left of $t$) into $t$ to $b$ (right of $t$),
as done in the classical fractional calculus \cite{book:Podlubny}.
Here we restrict ourselves to the delta approach to time scales.
Analogous definitions are, however, trivially obtained for the nabla
approach to time scales by using the duality theory of \cite{MyID:307}.
\end{remark}

Along the work, we consider the order $\alpha$ of the fractional derivatives
in the real interval $(0,1)$. We can, however, easily generalize our definition
of fractional derivative to any positive real $\alpha$. Indeed,
let $\alpha \in \mathbb{R}^+ \setminus \mathbb{N}$.
Then there exists $\beta \in (0,1)$
such that $\alpha=\lfloor\alpha\rfloor+\beta$,
where $\lfloor \alpha \rfloor$ is the integer part of $\alpha$, and we can set
$$
{_{a}^{\mathbb{T}}D}_{t}^{\alpha}h
:=
{_{a}^{\mathbb{T}}D}_{t}^{\beta}
h^{\Delta^{\lfloor\alpha\rfloor}}.
$$
Fractional operators of negative order are defined as follows.

\begin{definition}
\label{d2}
If $-1 <\alpha <0$, then the (Riemann--Liouville) fractional derivative
of order $\alpha$ is the fractional integral of order $-\alpha$, that is,
$$
{_{a}^{\mathbb{T}}D}_{t}^{\alpha}
:= {_{a}^{\mathbb{T}}I}_{t}^{-\alpha}.
$$
\end{definition}

\begin{definition}
\label{d3}
If $-1 <\alpha <0$, then the fractional integral of order $\alpha$
is the fractional derivative of order $-\alpha$, that is,
$$
{_{a}^{\mathbb{T}}I}_{t}^{\alpha}
:= {_{a}^{\mathbb{T}}D}_{t}^{-\alpha}.
$$
\end{definition}


\subsection{Properties of the time-scale fractional operators}
\label{sec:properties}

In this section we prove some fundamental properties
of the fractional operators on time scales.

\begin{proposition}
\label{P2}
Let $\mathbb{T}$ be a time scale with derivative $\Delta$,
and $0 < \alpha <1$. Then,
$$
{_{a}^{\mathbb{T}}D}_{t}^{\alpha}
= \Delta \circ {_{a}^{\mathbb{T}}I_{t}^{1-\alpha}}.
$$
\end{proposition}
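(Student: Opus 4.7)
The plan is a direct unwinding of the definitions, with no real machinery needed beyond linearity of the delta derivative. First I would expand $({_{a}^{\mathbb{T}}I_{t}^{1-\alpha}}h)(t)$ using Definition~\ref{def:FI} with $\alpha$ replaced by $1-\alpha$. Since $(1-\alpha)-1=-\alpha$, this immediately gives
$$
{_{a}^{\mathbb{T}}I_{t}^{1-\alpha}}h(t)
= \frac{1}{\Gamma(1-\alpha)}\int_{a}^{t}(t-s)^{-\alpha}h(s)\,\Delta s,
$$
so the operator ${_{a}^{\mathbb{T}}I_{t}^{1-\alpha}}$ is, up to the constant factor $1/\Gamma(1-\alpha)$, precisely the integral expression appearing inside the $\Delta$ in \eqref{eq3}.

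Next I would apply the delta derivative to the above identity. Because $\alpha\in(0,1)$ forces $\Gamma(1-\alpha)$ to be a finite nonzero constant, the linearity of $\Delta$ (immediate from its definition as a limit of difference quotients) lets me pull $1/\Gamma(1-\alpha)$ outside, yielding
$$
\Delta\bigl({_{a}^{\mathbb{T}}I_{t}^{1-\alpha}}h\bigr)(t)
= \frac{1}{\Gamma(1-\alpha)}\left(\int_{a}^{t}(t-s)^{-\alpha}h(s)\,\Delta s\right)^{\!\Delta},
$$
which matches the right-hand side of \eqref{eq3} verbatim. Thus $\Delta\circ{_{a}^{\mathbb{T}}I_{t}^{1-\alpha}}$ and ${_{a}^{\mathbb{T}}D}_{t}^{\alpha}$ agree on every $h$ for which either side makes sense, proving the stated operator identity.

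There is really no obstacle here; the only point worth flagging is that one must verify the exponent arithmetic $(1-\alpha)-1=-\alpha$ and check that $\Gamma(1-\alpha)$ is well-defined (which it is, because $1-\alpha\in(0,1)$). No properties of time scales beyond the linearity of $\Delta$ and the plain form of Definition~\ref{def:FI} are used, so the proof is a one-line identification once both sides are written out.
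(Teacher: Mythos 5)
Your proof is correct and takes essentially the same route as the paper's: both arguments are a direct unwinding of Definition~\ref{def:FI} and formula \eqref{eq3}, identifying the integral inside the delta derivative as $\Gamma(1-\alpha)$ times ${_{a}^{\mathbb{T}}I}_{t}^{1-\alpha}h(t)$. The only cosmetic difference is that you explicitly invoke linearity of $\Delta$ to pull out the constant, a step the paper performs silently.
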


\begin{proof}
Let $h:\mathbb{T}\rightarrow\mathbb{R}$. From \eqref{eq3} we have
\begin{equation*}
\begin{split}
{_{a}^{\mathbb{T}}D}_{t}^{\alpha}h(t)
&=\frac{1}{\Gamma(1-\alpha)}\left(\int_{a}^{t}
(t-s)^{-\alpha}h(s)\Delta s\right)^{\Delta}\\
&=\left(_{a}^{\mathbb{T}}I_{t}^{1-\alpha}h(t)\right)^{\Delta}
=\left(\Delta \circ {_{a}^{\mathbb{T}}I_{t}^{1-\alpha}}\right) h(t).
\end{split}
\end{equation*}
The proof is complete.
\end{proof}

\begin{proposition}
\label{P3}
For any function $h$ integrable on $[a,b]$, the
Riemann--Liouville $\Delta$-fractional integral satisfies
${_{a}^{\mathbb{T}}I}_{t}^{\alpha}
\circ {_{a}^{\mathbb{T}}I_{t}^{\beta}}
= {_{a}^{\mathbb{T}}I_{t}^{\alpha+\beta}}$
for $\alpha> 0$ and $\beta > 0$.
\end{proposition}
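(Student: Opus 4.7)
The plan is to unfold the composition via Definition~\ref{def:FI}, swap the two iterated $\Delta$-integrals by a time-scale Fubini-type theorem, and then reduce the resulting inner integral to a Beta-function value. Writing out the left-hand side I would first get
\begin{equation*}
\bigl({_{a}^{\mathbb{T}}I_{t}^{\alpha}} \circ {_{a}^{\mathbb{T}}I_{t}^{\beta}}\bigr) h(t)
= \frac{1}{\Gamma(\alpha)\Gamma(\beta)} \int_{a}^{t} (t-s)^{\alpha-1} \left(\int_{a}^{s} (s-\tau)^{\beta-1} h(\tau)\,\Delta\tau\right) \Delta s,
\end{equation*}
and after swapping the order of integration on the triangular region $\{(s,\tau):a\le\tau\le s\le t\}$, this becomes
\begin{equation*}
\frac{1}{\Gamma(\alpha)\Gamma(\beta)} \int_{a}^{t} h(\tau) \left(\int_{\tau}^{t} (t-s)^{\alpha-1}(s-\tau)^{\beta-1} \Delta s\right) \Delta\tau.
\end{equation*}
Comparison with the definition of ${_{a}^{\mathbb{T}}I_{t}^{\alpha+\beta}}h(t)$ then reduces the identity to the single claim that $\int_{\tau}^{t}(t-s)^{\alpha-1}(s-\tau)^{\beta-1}\Delta s = (t-\tau)^{\alpha+\beta-1} \mathrm{B}(\alpha,\beta)$.

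To establish this Beta-type identity on a time scale, I would pass from the $\Delta$-integral to the classical Lebesgue integral on $[\tau,t]$ by means of Proposition~\ref{P1}, applied on the two subintervals on which each monotone factor of the integrand is monotone (splitting, if necessary, at $s=(\tau+t)/2$). On the real interval the substitution $s = \tau+(t-\tau)u$ converts the integral into $(t-\tau)^{\alpha+\beta-1}\int_{0}^{1}u^{\beta-1}(1-u)^{\alpha-1}du = (t-\tau)^{\alpha+\beta-1}\mathrm{B}(\alpha,\beta)$, and using $\mathrm{B}(\alpha,\beta)=\Gamma(\alpha)\Gamma(\beta)/\Gamma(\alpha+\beta)$ cancels the prefactors and yields exactly ${_{a}^{\mathbb{T}}I_{t}^{\alpha+\beta}}h(t)$.

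The main obstacle is precisely the evaluation of the inner integral: on a generic time scale the delta integral and the Lebesgue integral of the extension need not coincide, because scattered points contribute rectangular jump terms, so Proposition~\ref{P1} by itself only delivers one inequality. A rigorous argument therefore requires either a matching reverse inequality (obtained by symmetry or by an extension analogous to Proposition~\ref{P1} tailored to decreasing factors) or a direct appeal to a time-scale Fubini theorem together with the observation that, within the double integral, the problematic jump contributions integrate to zero against the outer factor. Justifying the swap of the two $\Delta$-integrals itself is a secondary technical point that should be covered by a standard Fubini statement for rd-continuous integrands on time scales.
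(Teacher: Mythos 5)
Your outline follows exactly the same route as the paper's own proof: unfold the composition, swap the iterated $\Delta$-integrals over the triangle $\{a\le\tau\le s\le t\}$ by Fubini, and reduce everything to the single identity $\int_{\tau}^{t}(t-s)^{\alpha-1}(s-\tau)^{\beta-1}\Delta s=(t-\tau)^{\alpha+\beta-1}\mathrm{B}(\alpha,\beta)$, which the paper evaluates via the substitution $s=\tau+r(t-\tau)$. So in terms of strategy there is nothing different here.

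However, the ``main obstacle'' you flag at the end is a genuine gap, and you are right that neither half of your repair works: Proposition~\ref{P1} gives only a one-sided inequality (and only for increasing integrands), and there is no cancellation of the jump contributions inside the double integral, because at a right-scattered point $s$ the $\Delta$-integral contributes a term $\mu(s)(t-s)^{\alpha-1}(s-\tau)^{\beta-1}$ that has no counterpart in the Lebesgue integral. The Beta-type identity is simply false on a general time scale: if $[\tau,t]_{\mathbb{T}}=\{\tau,t\}$ with $\mu(\tau)=t-\tau$, the left-hand side equals $(t-\tau)^{\alpha}\cdot 0^{\beta-1}$, which is $0$ or $+\infty$ rather than $(t-\tau)^{\alpha+\beta-1}\mathrm{B}(\alpha,\beta)$. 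The paper's proof does not overcome this either --- it performs the change of variables $s=\tau+r(t-\tau)$ and silently replaces $\Delta s$ by $(t-\tau)\,dr$, which is legitimate only when the interval $[\tau,t]$ lies entirely in a dense portion of $\mathbb{T}$ (e.g.\ $\mathbb{T}=\mathbb{R}$). So your proposal is incomplete exactly where the paper's argument is incomplete, and the honest conclusion is that the semigroup property, as stated for arbitrary time scales, does not follow from this line of argument; you should not expect to close the gap by refining Proposition~\ref{P1} or the Fubini step.
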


\begin{proof}
By definition,
\begin{equation*}
\begin{split}
\bigg({_{a}^{\mathbb{T}}I}_{t}^{\alpha}
&\circ {_{a}^{\mathbb{T}}I_{t}^{\beta}}\bigg)(h(t))
={_{a}^{\mathbb{T}}I}_{t}^{\alpha}\left(
{_{a}^{\mathbb{T}}I}_{t}^{\beta}(h(t))\right)\\
&=\frac{1}{\Gamma(\alpha)}\int_{a}^{t}
(t-s)^{\alpha-1}\left({_{a}^{\mathbb{T}}I}_{t}^{\beta}(h(s))\right)\Delta s\\
&=\frac{1}{\Gamma(\alpha)}\int_{a}^{t}
\left((t-s)^{\alpha-1}\frac{1}{\Gamma(\beta)}
\int_{a}^{s}(s-u)^{\beta-1}h(u)\Delta u\right)\Delta s\\
&=\frac{1}{\Gamma(\alpha)\Gamma(\beta)}\int_{a}^{t}\int_{a}^{s}
(t-s)^{\alpha-1}(s-u)^{\beta-1}h(u)\Delta u\Delta s\\
&=\frac{1}{\Gamma(\alpha)\Gamma(\beta)}\int_{a}^{t}
\left[\int_{a}^{s}(t-s)^{\alpha-1}(s-u)^{\beta-1}h(u)\Delta u
+\int_{s}^{t}(t-s)^{\alpha-1}(s-u)^{\beta-1}h(u)\Delta\right]\Delta s\\
&=\frac{1}{\Gamma(\alpha)\Gamma(\beta)}\int_{a}^{t}
\left[\int_{a}^{t}(t-s)^{\alpha-1}(s-u)^{\beta-1}h(u)\Delta u\right]\Delta s.
\end{split}
\end{equation*}
From Fubini's theorem, we interchange the order of integration to obtain
\begin{equation*}
\begin{split}
\left({_{a}^{\mathbb{T}}I}_{t}^{\alpha}
\circ {{_{a}^{\mathbb{T}}I}_{t}^{\beta}}\right)(h(t))
&=\frac{1}{\Gamma(\alpha)\Gamma(\beta)}\int_{a}^{t}
\left[\int_{a}^{t}(t-s)^{\alpha-1}(s-u)^{\beta-1}h(u)\Delta s\right]\Delta u\\
&=\frac{1}{\Gamma(\alpha)\Gamma(\beta)}\int_{a}^{t}
\left[\int_{a}^{t}(t-s)^{\alpha-1}(s-u)^{\beta-1}\Delta s\right]h(u)\Delta u\\
&=\frac{1}{\Gamma(\alpha)\Gamma(\beta)}\int_{a}^{t}
\left[\int_{u}^{t}(t-s)^{\alpha-1}(s-u)^{\beta-1}\Delta s\right]h(u)\Delta u.
\end{split}
\end{equation*}
By setting $s=u+r(t-u)$, $r\in\mathbb{R}$, we obtain that
\begin{equation*}
\begin{split}
& \left({_{a}^{\mathbb{T}}I}_{t}^{\alpha}
\circ {_{a}^{\mathbb{T}}I_{t}^{\beta}}\right)(h(t))\\
&\quad=\frac{1}{\Gamma(\alpha)\Gamma(\beta)}\int_{a}^{t}
\left[\int_{0}^{1}(1-r)^{\alpha-1}(t-u)^{\alpha-1}
r^{\beta-1}(t-u)^{\beta-1}(t-u)dr \right]h(u)\Delta u\\
&\quad=\frac{1}{\Gamma(\alpha)\Gamma(\beta)}\int_{0}^{1}
(1-r)^{\alpha-1}r^{\beta-1}d r\int_{a}^{t}(t-u)^{\alpha+\beta-1}h(u)\Delta u\\
&\quad=\frac{B(\alpha,\beta)}{\Gamma(\alpha)\Gamma(\beta)}
\int_{a}^{t}(t-u)^{\alpha+\beta-1}h(u)\Delta u
=\frac{1}{\Gamma(\alpha+\beta)}
\int_{a}^{t}(t-u)^{\alpha+\beta-1}h(u)\Delta u\\
&\quad= _{a}^{\mathbb{T}}I_{t}^{\alpha+\beta}h(t).
\end{split}
\end{equation*}
The proof is complete.
\end{proof}

\begin{proposition}
\label{P4}
For any function $h$ integrable on $[a,b]$ one has
${_{a}^{\mathbb{T}}D}_{t}^{\alpha}
\circ {_{a}^{\mathbb{T}}I_{t}^{\alpha}}h = h$.
\end{proposition}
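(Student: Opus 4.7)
The plan is to chain together Propositions~\ref{P2} and \ref{P3} and then invoke the time-scale fundamental theorem of calculus. Specifically, I would write
\[
{_{a}^{\mathbb{T}}D}_{t}^{\alpha}\bigl({_{a}^{\mathbb{T}}I}_{t}^{\alpha} h\bigr)
= \bigl(\Delta \circ {_{a}^{\mathbb{T}}I}_{t}^{1-\alpha}\bigr)\bigl({_{a}^{\mathbb{T}}I}_{t}^{\alpha} h\bigr)
= \Delta\bigl({_{a}^{\mathbb{T}}I}_{t}^{1-\alpha} \circ {_{a}^{\mathbb{T}}I}_{t}^{\alpha}\bigr) h,
\]
using Proposition~\ref{P2} to rewrite $_{a}^{\mathbb{T}}D_{t}^{\alpha}$, and associativity of composition.

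Next, I would apply Proposition~\ref{P3} with the choices $\alpha \mapsto 1-\alpha$ and $\beta \mapsto \alpha$, both of which lie in $(0,1)$, so that the inner composition collapses to $_{a}^{\mathbb{T}}I_{t}^{1}$. By Definition~\ref{def:FI}, and since $\Gamma(1)=1$ and $(t-s)^{0}=1$, this is simply
\[
{_{a}^{\mathbb{T}}I}_{t}^{1} h(t) = \int_{a}^{t} h(s)\,\Delta s.
\]
Finally, one applies $\Delta$ to both sides: by the fundamental theorem of the delta calculus, the delta antiderivative of an integrable (rd-continuous) function $h$ differentiated yields $h(t)$ back, giving $_{a}^{\mathbb{T}}D_{t}^{\alpha}\bigl({_{a}^{\mathbb{T}}I}_{t}^{\alpha} h\bigr)(t) = h(t)$.

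The main obstacle is a subtle one: applying Proposition~\ref{P3} requires $_{a}^{\mathbb{T}}I_{t}^{\alpha} h$ to be integrable on $[a,b]$, and applying the concluding $\Delta$-step requires that the antiderivative $t \mapsto \int_{a}^{t} h(s)\,\Delta s$ be actually delta-differentiable with derivative $h$. On a general time scale this is only guaranteed under an rd-continuity (or comparable regularity) assumption on $h$. So, although the statement is written for $h$ merely integrable on $[a,b]$, the proof implicitly needs enough regularity of $h$ for the fundamental theorem of the delta calculus to apply pointwise; I would either assume $h \in \mathcal{C}_{rd}$ or explicitly flag this as the precise hypothesis under which the identity $_{a}^{\mathbb{T}}D_{t}^{\alpha}\circ {_{a}^{\mathbb{T}}I_{t}^{\alpha}} = \mathrm{id}$ is to be understood.
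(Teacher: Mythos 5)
Your proof is correct and follows exactly the paper's own argument: rewrite ${_{a}^{\mathbb{T}}D}_{t}^{\alpha}$ as $\Delta \circ {_{a}^{\mathbb{T}}I}_{t}^{1-\alpha}$ via Proposition~\ref{P2}, collapse ${_{a}^{\mathbb{T}}I}_{t}^{1-\alpha}\circ {_{a}^{\mathbb{T}}I}_{t}^{\alpha}$ to ${_{a}^{\mathbb{T}}I}_{t}^{1}$ via Proposition~\ref{P3}, and finish with the fundamental theorem of the delta calculus. Your observation that the final delta-differentiation step tacitly requires more regularity of $h$ than mere integrability (e.g.\ rd-continuity) is a fair criticism of the statement as written, and the paper does not address it.
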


\begin{proof}
By Propositions~\ref{P2} and \ref{P3}, we have
\begin{equation*}
{_{a}^{\mathbb{T}}D}_{t}^{\alpha}
\circ {_{a}^{\mathbb{T}}I_{t}^{\alpha}}h(t)
=\left[{_{a}^{\mathbb{T}}I}_{t}^{1-\alpha}\left(
{_{a}^{\mathbb{T}}I}_{t}^{\alpha}(h(t))\right)\right]^{\Delta}
=\left[{_{a}^{\mathbb{T}}I}_{t}h(t)\right]^{\Delta}
=h(t).
\end{equation*}
The proof is complete.
\end{proof}

\begin{corollary}
For $0<\alpha<1$, we have
${_{a}^{\mathbb{T}}D}_{t}^{\alpha}
\circ {_{a}^{\mathbb{T}}D_{t}^{-\alpha}}
=Id$
and
${_{a}^{\mathbb{T}}I}_{t}^{-\alpha}
\circ {_{a}^{\mathbb{T}}I_{t}^{\alpha}}=Id$,
where $Id$ denotes the identity operator.
\end{corollary}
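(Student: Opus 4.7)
The corollary is essentially a bookkeeping consequence of the conventions introduced in Definitions~\ref{d2} and~\ref{d3}, combined with Proposition~\ref{P4}. There is no real analytic content to add: the work has already been done in proving $_{a}^{\mathbb{T}}D_{t}^{\alpha}\circ {_{a}^{\mathbb{T}}I}_{t}^{\alpha}=h$, and the two identities in the corollary are just restatements of that fact under the negative-order conventions.

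The plan is as follows. First, since $0<\alpha<1$, we have $-1<-\alpha<0$, so Definition~\ref{d2} applies to $_{a}^{\mathbb{T}}D_{t}^{-\alpha}$ and Definition~\ref{d3} applies to $_{a}^{\mathbb{T}}I_{t}^{-\alpha}$. I would therefore substitute
\[
{_{a}^{\mathbb{T}}D}_{t}^{-\alpha}={_{a}^{\mathbb{T}}I}_{t}^{\alpha},
\qquad
{_{a}^{\mathbb{T}}I}_{t}^{-\alpha}={_{a}^{\mathbb{T}}D}_{t}^{\alpha}
\]
directly into the two compositions appearing in the statement. Each of them collapses to $_{a}^{\mathbb{T}}D_{t}^{\alpha}\circ {_{a}^{\mathbb{T}}I}_{t}^{\alpha}$, which equals the identity by Proposition~\ref{P4}.

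Since Proposition~\ref{P4} is assumed valid for any function $h$ integrable on $[a,b]$, no extra regularity hypothesis needs to be imposed here. The only minor subtlety to mention is that the symmetric statement ${_{a}^{\mathbb{T}}I}_{t}^{\alpha}\circ {_{a}^{\mathbb{T}}D}_{t}^{\alpha}=Id$ is \emph{not} being claimed (and indeed would fail without a boundary condition such as \eqref{eq2}); the two equalities in the corollary are precisely those that translate into the right order of composition in Proposition~\ref{P4}. I expect no genuine obstacle — the proof should be a two-line application of the definitions followed by a citation of Proposition~\ref{P4}.
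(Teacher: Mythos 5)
Your proposal is correct and matches the paper's own proof essentially verbatim: both reduce each composition to ${_{a}^{\mathbb{T}}D}_{t}^{\alpha}\circ{_{a}^{\mathbb{T}}I}_{t}^{\alpha}$ via the negative-order conventions and then invoke Proposition~\ref{P4}. If anything, your attribution of which definition handles which operator (Definition~\ref{d2} for ${_{a}^{\mathbb{T}}D}_{t}^{-\alpha}$, Definition~\ref{d3} for ${_{a}^{\mathbb{T}}I}_{t}^{-\alpha}$) is the more careful one, and your remark that ${_{a}^{\mathbb{T}}I}_{t}^{\alpha}\circ{_{a}^{\mathbb{T}}D}_{t}^{\alpha}=Id$ is deliberately not claimed is a sensible observation.
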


\begin{proof}
From Definition~\ref{d3} and Proposition~\ref{P4}, we have that
${_{a}^{\mathbb{T}}D}_{t}^{\alpha}
\circ {_{a}^{\mathbb{T}}D_{t}^{-\alpha}}
= {_{a}^{\mathbb{T}}D}_{t}^{\alpha}
\circ {{_{a}^{\mathbb{T}}I}_{t}^{\alpha}}
= Id$;
from Definition~\ref{d2} and Proposition~\ref{P4}, we have that
${_{a}^{\mathbb{T}}I}_{t}^{-\alpha}
\circ {{_{a}^{\mathbb{T}}I}_{t}^{\alpha}}
= {{_{a}^{\mathbb{T}}D}_{t}^{\alpha}}
\circ {{_{a}^{\mathbb{T}}I}_{t}^{\alpha}}
= Id$.
\end{proof}

\begin{definition}
For $\alpha>0$, let ${_{a}^{\mathbb{T}}I}_{t}^{\alpha}([a,b])$ denote the space
of functions that can be represented by the Riemann--Liouville $\Delta$ integral
of order $\alpha$ of some $\mathcal{C}([a,b])$-function.
\end{definition}

\begin{theorem}
\label{th1}
Let $f\in \mathcal{C}([a,b])$ and $\alpha>0$.
In order that $f \in {_{a}^{\mathbb{T}}}I_{t}^{\alpha}([a,b])$,
it is necessary and sufficient that
\begin{equation}
\label{e4}
{_{a}^{\mathbb{T}}I}_{t}^{1-\alpha}f \in C^1([a,b])
\end{equation}
and
\begin{equation}
\label{e5}
\left.\left(_{a}^{\mathbb{T}}I_{t}^{1-\alpha}f(t)\right)\right|_{t=a}=0.
\end{equation}
\end{theorem}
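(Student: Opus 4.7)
The plan is to prove both implications by reducing everything to the two already-established identities: the semigroup property ${_{a}^{\mathbb{T}}I}_{t}^{\alpha}\circ{_{a}^{\mathbb{T}}I}_{t}^{\beta}={_{a}^{\mathbb{T}}I}_{t}^{\alpha+\beta}$ (Proposition~\ref{P3}) and the left inverse identity ${_{a}^{\mathbb{T}}D}_{t}^{\alpha}\circ{_{a}^{\mathbb{T}}I}_{t}^{\alpha}=\mathrm{Id}$ (Proposition~\ref{P4}). Throughout, the candidate preimage of $f$ under ${_{a}^{\mathbb{T}}I}_{t}^{\alpha}$ is $g^{\Delta}$ where $g:={_{a}^{\mathbb{T}}I}_{t}^{1-\alpha}f$, which by Proposition~\ref{P2} coincides with ${_{a}^{\mathbb{T}}D}_{t}^{\alpha}f$.

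\textbf{Necessity.} Suppose $f={_{a}^{\mathbb{T}}I}_{t}^{\alpha}\varphi$ for some $\varphi\in\mathcal{C}([a,b])$. By Proposition~\ref{P3} applied with exponents $1-\alpha$ and $\alpha$,
$$
{_{a}^{\mathbb{T}}I}_{t}^{1-\alpha}f
={_{a}^{\mathbb{T}}I}_{t}^{1-\alpha}\circ{_{a}^{\mathbb{T}}I}_{t}^{\alpha}\varphi
={_{a}^{\mathbb{T}}I}_{t}^{1}\varphi
=\int_{a}^{t}\varphi(s)\Delta s.
$$
Since $\varphi$ is continuous, this delta antiderivative lies in $C^{1}([a,b])$ and vanishes at $t=a$, yielding both \eqref{e4} and \eqref{e5}.

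\textbf{Sufficiency.} Assume $g:={_{a}^{\mathbb{T}}I}_{t}^{1-\alpha}f\in C^{1}([a,b])$ and $g(a)=0$. I would set $\varphi:=g^{\Delta}$, which is continuous on $[a,b]$, and show $f={_{a}^{\mathbb{T}}I}_{t}^{\alpha}\varphi$. The trick is to apply ${_{a}^{\mathbb{T}}I}_{t}^{1-\alpha}$ to the difference $\phi:=f-{_{a}^{\mathbb{T}}I}_{t}^{\alpha}\varphi$ and use Proposition~\ref{P3} together with the fundamental theorem of calculus on time scales:
$$
{_{a}^{\mathbb{T}}I}_{t}^{1-\alpha}\phi
={_{a}^{\mathbb{T}}I}_{t}^{1-\alpha}f
-{_{a}^{\mathbb{T}}I}_{t}^{1}g^{\Delta}
=g(t)-\bigl(g(t)-g(a)\bigr)=0.
$$
Applying ${_{a}^{\mathbb{T}}D}_{t}^{1-\alpha}$ to both sides and invoking Proposition~\ref{P4} gives $\phi\equiv 0$, hence $f={_{a}^{\mathbb{T}}I}_{t}^{\alpha}\varphi\in{_{a}^{\mathbb{T}}I}_{t}^{\alpha}([a,b])$.

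\textbf{Main obstacle.} The only delicate point is the identity ${_{a}^{\mathbb{T}}I}_{t}^{1}g^{\Delta}=g(t)-g(a)$; it is the fundamental theorem of the delta-calculus and requires $g$ to be continuously delta-differentiable on $[a,b]$, which is exactly the content of the hypothesis \eqref{e4}. Once this is in place, the two applications of Propositions~\ref{P3} and \ref{P4} close the argument, and the boundary value \eqref{e5} is what eliminates the otherwise-present constant obstruction $g(a)$.
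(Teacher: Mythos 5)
Your proof is correct and follows the same architecture as the paper's: necessity via the semigroup property of Proposition~\ref{P3}, and sufficiency by applying the fundamental theorem of the delta-calculus to $g={_{a}^{\mathbb{T}}I}_{t}^{1-\alpha}f$ (where \eqref{e5} eliminates the constant $g(a)$) and then showing ${_{a}^{\mathbb{T}}I}_{t}^{1-\alpha}\bigl(f-{_{a}^{\mathbb{T}}I}_{t}^{\alpha}g^{\Delta}\bigr)\equiv 0$. The one place you genuinely diverge is the last step: the paper concludes $f-{_{a}^{\mathbb{T}}I}_{t}^{\alpha}\varphi\equiv 0$ by appealing to the uniqueness of solutions of Abel's integral equation, citing an external reference, whereas you obtain the injectivity of ${_{a}^{\mathbb{T}}I}_{t}^{1-\alpha}$ internally by applying ${_{a}^{\mathbb{T}}D}_{t}^{1-\alpha}$ to both sides and invoking Proposition~\ref{P4}. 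Your version is the more self-contained of the two, since it uses only results already proved in the paper, and the injectivity you derive is precisely the content of the uniqueness statement the paper cites. (In both versions the argument tacitly requires $0<\alpha\le 1$ so that ${_{a}^{\mathbb{T}}I}_{t}^{1-\alpha}$ is a genuine fractional integral and Proposition~\ref{P3} applies with exponents $1-\alpha$ and $\alpha$; that restriction is an issue with the theorem's stated hypothesis ``$\alpha>0$'', not with your proof.)
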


\begin{proof}
Assume $f \in {_{a}^{\mathbb{T}}I}_{t}^{\alpha}([a,b])$,
$f(t) = {_{a}^{\mathbb{T}}I}_{t}^{\alpha}g(t)$
for some $g\in\mathcal{C}([a,b])$, and
$$
{_{a}^{\mathbb{T}}I}_{t}^{1-\alpha}(f(t))
= {_{a}^{\mathbb{T}}I}_{t}^{1-\alpha}\left(
{_{a}^{\mathbb{T}}I}_{t}^{\alpha}g(t)\right).
$$
From Proposition~\ref{P3}, we have
$$
{_{a}^{\mathbb{T}}I}_{t}^{1-\alpha}(f(t))
={_{a}^{\mathbb{T}}I}_{t}g(t)
=\int_{a}^{t}g(s)\Delta s.
$$
Therefore,
${_{a}^{\mathbb{T}}I}_{t}^{1-\alpha} f \in \mathcal{C}([a,b])$
and
$$
\left.\left({_{a}^{\mathbb{T}}I}_{t}^{1-\alpha}f(t)\right)\right|_{t=a}
=\int_{a}^{t}g(s)\Delta s=0.
$$
Conversely, assume that $f\in \mathcal{C}([a,b])$
satisfies \eqref{e4} and \eqref{e5}. Then, by Taylor's formula applied to
function ${_{a}^{\mathbb{T}}I}_{t}^{1-\alpha} f$, one has
$$
{_{a}^{\mathbb{T}}I}_{t}^{1-\alpha}f(t)
=\int_{a}^{t}\frac{\Delta}{\Delta s}
{_{a}^{\mathbb{T}}I}_{t}^{1-\alpha}f(s)\Delta s,
\quad \forall t\in[a,b].
$$
Let $\varphi(t) := \frac{\Delta}{\Delta s}
{_{a}^{\mathbb{T}}I}_{t}^{1-\alpha}f(t)$.
Note that $\varphi \in \mathcal{C}([a,b])$ by \eqref{e4}.
Now, by Proposition~\ref{P3}, we have
$$
{_{a}^{\mathbb{T}}I}_{t}^{1-\alpha}(f(t))
= {_{a}^{\mathbb{T}}I}_{t}^{1}\varphi(t)
= {_{a}^{\mathbb{T}}I}_{t}^{1-\alpha}\left[
{_{a}^{\mathbb{T}}I}_{t}^{\alpha}(\varphi(t))\right]
$$
and thus
$$
{_{a}^{\mathbb{T}}I}_{t}^{1-\alpha}(f(t))
- {_{a}^{\mathbb{T}}I}_{t}^{1-\alpha}\left[
{_{a}^{\mathbb{T}}I}_{t}^{\alpha}(\varphi(t))\right]
\equiv 0.
$$
Then,
$$
{_{a}^{\mathbb{T}}I}_{t}^{1-\alpha}\left[
f - {_{a}^{\mathbb{T}}I}_{t}^{\alpha}(\varphi(t))\right]
\equiv 0.
$$
From the uniqueness of solution to Abel's integral
equation \cite{MyID:310}, this implies that
$f - {_{a}^{\mathbb{T}}I}_{t}^{\alpha}\varphi\equiv 0$.
Thus, $f = {_{a}^{\mathbb{T}}I}_{t}^{\alpha}\varphi$
and $f \in {_{a}^{\mathbb{T}}I}_{t}^{\alpha}[a,b]$.
\end{proof}

\begin{theorem}
\label{th2}
Let $\alpha > 0$ and $f\in \mathcal{C}([a,b])$ satisfy
the condition in Theorem~\ref{th1}. Then,
$$
\left({_{a}^{\mathbb{T}}I}_{t}^{\alpha}
\circ {_{a}^{\mathbb{T}}D}_{t}^{\alpha}\right)(f) = f.
$$
\end{theorem}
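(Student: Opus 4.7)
The statement asks for the left inverse property of the Riemann--Liouville fractional integral with respect to the fractional derivative, restricted to the subspace of functions that are themselves fractional integrals of continuous functions. The plan is to bootstrap entirely off what Theorem~\ref{th1} and Proposition~\ref{P4} already give us, so there is essentially no new analytic content — just a rewriting step.

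First I would invoke Theorem~\ref{th1}: since $f \in \mathcal{C}([a,b])$ satisfies \eqref{e4} and \eqref{e5}, the theorem asserts $f \in {_{a}^{\mathbb{T}}I}_{t}^{\alpha}([a,b])$, so there exists $g \in \mathcal{C}([a,b])$ with $f = {_{a}^{\mathbb{T}}I}_{t}^{\alpha} g$. This representation is the only ingredient we really need from the hypothesis.

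Next I would apply the fractional derivative to both sides and use Proposition~\ref{P4}, which tells us ${_{a}^{\mathbb{T}}D}_{t}^{\alpha} \circ {_{a}^{\mathbb{T}}I}_{t}^{\alpha}$ is the identity on integrable functions. This yields
\[
{_{a}^{\mathbb{T}}D}_{t}^{\alpha} f = {_{a}^{\mathbb{T}}D}_{t}^{\alpha}\bigl({_{a}^{\mathbb{T}}I}_{t}^{\alpha} g\bigr) = g.
\]
Finally, applying ${_{a}^{\mathbb{T}}I}_{t}^{\alpha}$ to both sides gives
\[
\bigl({_{a}^{\mathbb{T}}I}_{t}^{\alpha} \circ {_{a}^{\mathbb{T}}D}_{t}^{\alpha}\bigr)(f) = {_{a}^{\mathbb{T}}I}_{t}^{\alpha} g = f,
\]
which is exactly the claim.

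There is no real obstacle here — Theorem~\ref{th1} is doing all the work by guaranteeing the crucial representation $f = {_{a}^{\mathbb{T}}I}_{t}^{\alpha} g$ with continuous $g$, and Proposition~\ref{P4} supplies the one-sided inverse relation. The only mild point to be careful about is that Proposition~\ref{P4} is stated for integrable $h$, so we should note that $g \in \mathcal{C}([a,b])$ is a fortiori integrable, making the application of Proposition~\ref{P4} legitimate. No manipulation of the time-scale $\Delta$-integral or Fubini-style argument is required at this stage, since all of that was absorbed into the earlier propositions.
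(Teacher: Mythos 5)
Your proof is correct and follows essentially the same route as the paper: both invoke Theorem~\ref{th1} to write $f = {_{a}^{\mathbb{T}}I}_{t}^{\alpha}\varphi$ for some continuous $\varphi$, then cancel ${_{a}^{\mathbb{T}}D}_{t}^{\alpha}\circ{_{a}^{\mathbb{T}}I}_{t}^{\alpha}$ via Proposition~\ref{P4}. Your explicit remark that continuity of $g$ guarantees the integrability needed for Proposition~\ref{P4} is a small but welcome addition of care.
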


\begin{proof}
By Theorem~\ref{th1} and Proposition~\ref{P3}, we have:
\begin{equation*}
{_{a}^{\mathbb{T}}I}_{t}^{\alpha} \circ {_{a}^{\mathbb{T}}D_{t}^{\alpha}}f(t)
={_{a}^{\mathbb{T}}I}_{t}^{\alpha}
\circ {{_{a}^{\mathbb{T}}D}_{t}^{\alpha}}\left(
{_{a}^{\mathbb{T}}I}_{t}^{\alpha}\varphi(t)\right)
= {_{a}^{\mathbb{T}}I}_{t}^{\alpha}\varphi(t)
=f(t).
\end{equation*}
The proof is complete.
\end{proof}


\subsection{Existence of Solutions to Fractional IVPs on Time Scales}
\label{sec:existence:sol}

In this section we prove existence of solution to the
fractional order initial value problem \eqref{eq1}--\eqref{eq2}
defined on a time scale. For this, let $\mathbb{T}$ be a time scale and
$\mathcal{J}=[t_{0},t_{0}+a]\subset \mathbb{T}$.
Then the function $y\in\mathcal{C}(\mathcal{J},\mathbb{R})$ is a solution
of problem \eqref{eq1}--\eqref{eq2} if
$$
{_{t_{0}}^{\mathbb{T}}D}_{t}^{\alpha}y(t)= f(t,y)
\, \text{ on } \mathcal{J},
$$
$$
{_{t_{0}}^{\mathbb{T}}I}_{t}^{\alpha}y(t_{0})=0.
$$
To establish this solution, we need to prove the following lemma and theorem.

\begin{lemma}
Let $0<\alpha< 1$, $\mathcal{J}\subseteq\mathbb{T}$, and
$f: \mathcal{J}\times\mathbb{R}\rightarrow\mathbb{R}$.
Function $y$ is a solution of problem
\eqref{eq1}--\eqref{eq2} if and only if this function
is a solution of the following integral equation:
$$
y(t)=\frac{1}{\Gamma(\alpha)}\int_{t_{0}}^{t}
(t-s)^{\alpha-1}f(s,y(s))\Delta s.
$$
\end{lemma}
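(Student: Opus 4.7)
The plan is to prove the equivalence by applying the composition identities from Subsection~\ref{sec:properties} in each direction, with the initial condition \eqref{eq2} playing the role of condition \eqref{e5} needed to apply Theorem~\ref{th2}.

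For the forward direction, assume $y \in \mathcal{C}(\mathcal{J},\mathbb{R})$ satisfies \eqref{eq1}--\eqref{eq2}. By Proposition~\ref{P2}, ${_{t_{0}}^{\mathbb{T}}D}_{t}^{\alpha} y = \bigl({_{t_{0}}^{\mathbb{T}}I}_{t}^{1-\alpha} y\bigr)^{\Delta}$, so the hypothesis that this delta derivative equals the continuous function $f(\cdot,y(\cdot))$ together with \eqref{eq2} supplies exactly the two conditions \eqref{e4} and \eqref{e5} required by Theorem~\ref{th1}. Hence Theorem~\ref{th2} applies and gives
\[
y(t) = \bigl({_{t_{0}}^{\mathbb{T}}I}_{t}^{\alpha} \circ {_{t_{0}}^{\mathbb{T}}D}_{t}^{\alpha}\bigr) y(t)
= {_{t_{0}}^{\mathbb{T}}I}_{t}^{\alpha} f(t,y(t))
= \frac{1}{\Gamma(\alpha)}\int_{t_{0}}^{t}(t-s)^{\alpha-1} f(s,y(s))\,\Delta s,
\]
which is the desired integral equation.

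For the converse, suppose $y(t) = {_{t_{0}}^{\mathbb{T}}I}_{t}^{\alpha} f(t,y(t))$. Applying ${_{t_{0}}^{\mathbb{T}}D}_{t}^{\alpha}$ to both sides and invoking Proposition~\ref{P4} with $h(t) = f(t,y(t))$ yields ${_{t_{0}}^{\mathbb{T}}D}_{t}^{\alpha} y(t) = f(t,y(t))$, which is \eqref{eq1}. To verify \eqref{eq2}, compose ${_{t_{0}}^{\mathbb{T}}I}_{t}^{1-\alpha}$ with the integral representation and use Proposition~\ref{P3}:
\[
{_{t_{0}}^{\mathbb{T}}I}_{t}^{1-\alpha} y(t)
= \bigl({_{t_{0}}^{\mathbb{T}}I}_{t}^{1-\alpha} \circ {_{t_{0}}^{\mathbb{T}}I}_{t}^{\alpha}\bigr) f(t,y(t))
= {_{t_{0}}^{\mathbb{T}}I}_{t}^{1} f(t,y(t))
= \int_{t_{0}}^{t} f(s,y(s))\,\Delta s,
\]
which vanishes at $t = t_{0}$.

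The only subtle point is the forward direction, where one must justify that the hypotheses \eqref{eq1}--\eqref{eq2} place $y$ in the space ${_{t_{0}}^{\mathbb{T}}I}_{t}^{\alpha}(\mathcal{J})$ so that Theorem~\ref{th2} is applicable. This is where Proposition~\ref{P2} is essential: it re-expresses ${_{t_{0}}^{\mathbb{T}}D}_{t}^{\alpha} y$ as the delta derivative of ${_{t_{0}}^{\mathbb{T}}I}_{t}^{1-\alpha} y$, so equation \eqref{eq1} supplies the regularity \eqref{e4} while \eqref{eq2} supplies the vanishing \eqref{e5}. Everything else is a routine composition argument using the identities already established.
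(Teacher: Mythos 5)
Your argument is correct and follows the same core route as the paper: apply the composition identity of Theorem~\ref{th2} to get $y={_{t_{0}}^{\mathbb{T}}I}_{t}^{\alpha}\bigl({_{t_{0}}^{\mathbb{T}}D}_{t}^{\alpha}y\bigr)={_{t_{0}}^{\mathbb{T}}I}_{t}^{\alpha}f(\cdot,y(\cdot))$. The difference is one of completeness rather than of method: the paper's proof is a two-line computation that invokes Theorem~\ref{th2} without checking its hypotheses and stops after the forward implication, whereas you verify, via Proposition~\ref{P2}, that \eqref{eq1} supplies the regularity condition \eqref{e4} and that \eqref{eq2} is exactly \eqref{e5}, so that Theorems~\ref{th1} and~\ref{th2} are legitimately applicable; you also supply the converse direction (Proposition~\ref{P4} for \eqref{eq1}, and the semigroup property of Proposition~\ref{P3} to check \eqref{eq2}), which the paper omits entirely even though the lemma is stated as an equivalence. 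In short, your write-up is the one that actually proves the stated ``if and only if,'' and it does so using only the identities already established in Section~\ref{sec:properties}.
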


\begin{proof}
By Theorem~\ref{th2},
${_{t_{0}}^{\mathbb{T}}I}_{t}^{\alpha}
\circ \left({_{t_{0}}^{\mathbb{T}}D}_{t}^{\alpha}(y(t))\right)
=y(t)$.
From~\eqref{eq3} we have
$$
y(t)=\frac{1}{\Gamma(\alpha)}\int_{t_{0}}^{t}
(t-s)^{\alpha-1}f(s,y(s))\Delta s.
$$
The proof is complete.
\end{proof}

Our first result is based on the Banach fixed point theorem \cite{C}.

\begin{theorem}
Assume $\mathcal{J}=[t_{0},t_{0}+a]\subseteq\mathbb{T}$.
The initial value problem \eqref{eq1}--\eqref{eq2}
has a unique solution on $\mathcal{J}$ if the function
$f(t,y)$ is a right-dense continuous bounded function
such that there exists $M >0$ for which $|f(t,y(t))|< M $
on $\mathcal{J}$ and the Lipshitz condition
$$
\exists \, L> 0 : \, \forall \, t\in \mathcal{J} \text{ and }
x,y\in\mathbb{R}, \quad \|f(t,x)-f(t,y)\|\leq L\|x-y\|
$$
holds.
\end{theorem}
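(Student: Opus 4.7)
The plan is to recast \eqref{eq1}--\eqref{eq2} as a fixed point equation and invoke the Banach contraction principle. By the preceding lemma, $y \in \mathcal{C}(\mathcal{J},\mathbb{R})$ solves the initial value problem if and only if $y = Ty$, where I define
\[
(Ty)(t) := \frac{1}{\Gamma(\alpha)}\int_{t_{0}}^{t} (t-s)^{\alpha-1} f(s,y(s))\, \Delta s.
\]
Since $(\mathcal{C}(\mathcal{J},\mathbb{R}),\|\cdot\|_\infty)$ is a Banach space, it suffices to check that $T$ maps this space into itself and that $T$ (or some iterate of it) is a strict contraction. Well-definedness follows from the boundedness $|f(t,y(t))|<M$ and the rd-continuity of $f$, which together ensure that the $\Delta$-integral is finite and that $t \mapsto (Ty)(t)$ is continuous on $\mathcal{J}$.

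Next I would establish the basic Lipschitz estimate. For $y,z \in \mathcal{C}(\mathcal{J},\mathbb{R})$ and $t \in \mathcal{J}$, the Lipschitz hypothesis gives
\[
|(Ty)(t) - (Tz)(t)| \le \frac{L\,\|y-z\|_\infty}{\Gamma(\alpha)} \int_{t_{0}}^{t} (t-s)^{\alpha-1}\, \Delta s.
\]
Since $\alpha \in (0,1)$, the kernel $s \mapsto (t-s)^{\alpha-1}$ is increasing and continuous on $[t_0,t)$, so Proposition~\ref{P1} dominates the $\Delta$-integral by its Riemann counterpart, yielding $\int_{t_0}^t (t-s)^{\alpha-1}\Delta s \le (t-t_0)^\alpha/\alpha$. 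Consequently,
\[
\|Ty - Tz\|_\infty \le \frac{L\,a^{\alpha}}{\Gamma(\alpha+1)}\,\|y - z\|_\infty,
\]
so whenever $La^{\alpha} < \Gamma(\alpha+1)$ the operator $T$ is already a contraction and Banach's theorem delivers the unique solution.

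To cover the general case I would pass to iterates of $T$. An induction on $n$, combining the semigroup property of $\Delta$-fractional integrals (Proposition~\ref{P3}), Proposition~\ref{P1} applied to the increasing kernels arising at each stage, and the beta-gamma identity from the preliminaries, is expected to produce the sharper pointwise bound
\[
|(T^n y)(t) - (T^n z)(t)| \le \frac{(L(t-t_{0})^{\alpha})^{n}}{\Gamma(n\alpha+1)}\,\|y - z\|_\infty.
\]
Because $\Gamma(n\alpha+1)$ grows super-exponentially while $(La^\alpha)^n$ grows only geometrically, the right-hand side tends to $0$ as $n \to \infty$. Hence some iterate $T^N$ is a strict contraction on $\mathcal{C}(\mathcal{J},\mathbb{R})$, Banach's fixed point theorem applies to $T^N$, and the unique fixed point so obtained is also the unique fixed point of $T$ itself, that is, the unique solution of \eqref{eq1}--\eqref{eq2} on $\mathcal{J}$.

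The main obstacle I anticipate is the induction step on the time scale: on $\mathbb{R}$ the argument rests on the explicit convolution identity $\int_{t_0}^t (t-s)^{\alpha-1}(s-t_0)^{n\alpha}\,ds = \mathrm{B}(\alpha,n\alpha+1)\,(t-t_0)^{(n+1)\alpha}$, and transplanting this to $\mathbb{T}$ requires coupling the $\Delta$-integral semigroup of Proposition~\ref{P3} with a careful application of Proposition~\ref{P1} to the (increasing, continuous) kernels at each stage, so that the Euclidean estimate survives in the delta calculus without requiring $a$ to be small.
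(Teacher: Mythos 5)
Your proposal is correct, and up to the derivation of $\|Ty-Tz\|_\infty \le \frac{La^{\alpha}}{\Gamma(\alpha+1)}\|y-z\|_\infty$ it coincides with the paper's argument: the paper likewise reformulates the problem through the preceding lemma, dominates the $\Delta$-integral of the increasing kernel $(t-s)^{\alpha-1}$ by its Riemann counterpart via Proposition~\ref{P1}, and arrives at exactly this Lipschitz constant for $T$. The two proofs diverge in how the contraction is closed. The paper stops at the single estimate and invokes the condition ``$\frac{La^{\alpha}}{\Gamma(\alpha)}\le 1$'' (note that the constant actually derived there is $\frac{La^{\alpha}}{\Gamma(\alpha+1)}$, and a strict inequality is needed); this smallness condition appears nowhere in the hypotheses of the theorem, so the paper's proof as written only covers the case $La^{\alpha}<\Gamma(\alpha+1)$. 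Your passage to the iterates $T^{n}$ with the bound $\frac{(L(t-t_{0})^{\alpha})^{n}}{\Gamma(n\alpha+1)}\|y-z\|_\infty$ is the standard Weissinger-type refinement and it proves the theorem as actually stated, with no restriction on $a$ or $L$; the induction step you flag as the main obstacle does go through, because for each $n$ the kernel $s\mapsto (t-s)^{\alpha-1}(s-t_{0})^{n\alpha}$ is a product of nonnegative increasing functions on $[t_{0},t)$, hence increasing, so Proposition~\ref{P1} bounds its $\Delta$-integral by the real integral $\mathrm{B}(\alpha,n\alpha+1)(t-t_{0})^{(n+1)\alpha}$, and the identity $\mathrm{B}(\alpha,n\alpha+1)=\Gamma(\alpha)\Gamma(n\alpha+1)/\Gamma((n+1)\alpha+1)$ closes the recursion --- you do not in fact need Proposition~\ref{P3} for this. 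Two caveats, both of which you share with the paper rather than introduce: Proposition~\ref{P1} is stated for continuous (hence bounded) increasing functions, while $(t-s)^{\alpha-1}$ is unbounded as $s\to t^{-}$ at left-dense points $t$, so its application to these singular kernels deserves a justification; and your iterate estimate is pointwise in $t$, so you should take the supremum over $t\in\mathcal{J}$ to obtain the operator norm bound $(La^{\alpha})^{n}/\Gamma(n\alpha+1)$ before applying the fixed point theorem to $T^{N}$ and transferring uniqueness back to $T$.
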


\begin{proof}
Let $\mathcal{S}$ be the set of rd-continuous functions
on $\mathcal{J}\subseteq \mathbb{T}$. For $y\in\mathcal{S}$, define
$$
\|y\|=\sup_{t\in\mathcal{J}}\|y(t)\|.
$$
It is easy to see that $\mathcal{S}$ is a Banach space with this norm.
The subset of $\mathcal{S}(\rho)$ and the operator $\mathrm{T}$
are defined by
$$
\mathcal{S}(\rho)=\left\{X\in \mathcal{S}:\|X_{s}\|\leq \rho\right\}
$$
and
$$
\mathrm{T}(y)=\frac{1}{\Gamma(\alpha)}
\int_{t_{0}}^{t} (t-s)^{\alpha-1} f(s,y(s))\Delta s.
$$
Then,
\begin{equation*}
|\mathrm{T}(y(t))|
\leq \frac{1}{\Gamma(\alpha)}
\int_{t_{0}}^{t}(t-s)^{\alpha-1}M\Delta s
\leq \frac{M}{\Gamma(\alpha)}\int_{t_{0}}^{t}(t-s)^{\alpha-1}\Delta s.
\end{equation*}
Since $(t-s)^{\alpha-1}$ is an increasing monotone function,
by using Proposition~\ref{P1} we can write that
$$
\int_{t_{0}}^{t} (t-s)^{\alpha-1}\Delta s
\leq \int_{t_{0}}^{t}(t-s)^{\alpha-1} ds.
$$
Consequently,
$$
\left|\mathrm{T}(y(t))\right|
\leq \frac{M}{\Gamma(\alpha)}\int_{t_{0}}^{t}(t-s)^{\alpha-1}ds
\leq \frac{M}{\Gamma(\alpha)}\frac{a^{\alpha}}{\alpha}
= \rho.
$$
By considering
$\rho=\frac{Ma^{\alpha}}{\Gamma(\alpha+1)}$,
we conclude that $\mathrm{T}$ is an operator from
$\mathcal{S}(\rho)$ to $\mathcal{S}(\rho)$. Moreover,
\begin{equation*}
\begin{split}
\|\mathrm{T}(x)-\mathrm{T}(y)\|&\leq\frac{1}{\Gamma(\alpha)}
\int_{t_{0}}^{t}(t-s)^{\alpha-1}|f(s,x(s))-f(s,y(s))|\Delta s\\
&\leq\frac{L\|\|x-y\|_{\infty}}{\Gamma(\alpha)}
\int_{t_{0}}^{t}(t-s)^{\alpha-1}\Delta s\\
&\leq\frac{L\|\|x-y\|_{\infty}}{\Gamma(\alpha)}
\int_{t_{0}}^{t}(t-s)^{\alpha-1}ds\\
&\leq\frac{L\|x-y\|_{\infty}}{\Gamma(\alpha)}\frac{a^{\alpha}}{\alpha}
=\frac{La^{\alpha}}{\Gamma(\alpha+1)}\|x-y\|_{\infty}
\end{split}
\end{equation*}
for $x,y\in\mathcal{S}(\rho)$. If $\frac{La^{\alpha}}{\Gamma(\alpha)} \leq 1$,
then it is a contraction map. This implies the existence and uniqueness
of solution to problem~\eqref{eq1}--\eqref{eq2}.
\end{proof}

\begin{theorem}
Suppose $f :\mathcal{J}\times\mathbb{R}\rightarrow\mathbb{R}$
is a rd-continuous bounded function such that there exists
$M > 0$ with $|f(t,y)|\leq M$ for all $t\in\mathcal{J}$, $y\in\mathbb{R}$.
Then problem~\eqref{eq1}--\eqref{eq2} has a solution on $\mathcal{J}$.
\end{theorem}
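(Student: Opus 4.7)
The plan is to invoke Schauder's fixed point theorem applied to the integral operator
\[
\mathrm{T}(y)(t) = \frac{1}{\Gamma(\alpha)} \int_{t_0}^{t} (t-s)^{\alpha-1} f(s,y(s))\,\Delta s
\]
on the Banach space $\mathcal{S} = \mathcal{C}_{rd}(\mathcal{J},\mathbb{R})$ with the sup norm. By the preceding lemma, fixed points of $\mathrm{T}$ are precisely the solutions of \eqref{eq1}--\eqref{eq2}, so it is enough to produce such a fixed point without invoking the Lipschitz hypothesis used in the previous theorem.

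First I would reuse the self-map computation already carried out: setting $\rho := M a^{\alpha}/\Gamma(\alpha+1)$ and $\mathcal{S}(\rho) := \{y \in \mathcal{S} : \|y\| \le \rho\}$, the bound $|f(t,y)| \le M$ together with Proposition~\ref{P1} yields $\mathrm{T}(\mathcal{S}(\rho)) \subseteq \mathcal{S}(\rho)$. The set $\mathcal{S}(\rho)$ is clearly nonempty, closed, bounded, and convex. Next I would verify that $\mathrm{T}$ is continuous: for a sequence $y_n \to y$ in $\mathcal{S}(\rho)$, rd-continuity of $f$ in its second argument gives pointwise convergence $f(s,y_n(s)) \to f(s,y(s))$, and the uniform bound $|f| \le M$ lets one estimate
\[
|\mathrm{T}(y_n)(t) - \mathrm{T}(y)(t)| \le \frac{1}{\Gamma(\alpha)} \int_{t_0}^{t}(t-s)^{\alpha-1}|f(s,y_n(s))-f(s,y(s))|\,\Delta s,
\]
which tends to $0$ uniformly in $t$ by an $\varepsilon/3$ argument (split the integral near the singularity $s=t$ and exploit continuity of $f$ on the compact set $\mathcal{J} \times [-\rho,\rho]$).

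The core of the proof is showing that $\mathrm{T}(\mathcal{S}(\rho))$ is relatively compact, for which I would invoke an Arzel\`a--Ascoli theorem on time scales: uniform boundedness holds by the self-map estimate, and equicontinuity must be established. For $t_1 < t_2$ in $\mathcal{J}$ and any $y \in \mathcal{S}(\rho)$,
\[
|\mathrm{T}(y)(t_2) - \mathrm{T}(y)(t_1)| \le \frac{M}{\Gamma(\alpha)}\left[\int_{t_0}^{t_1}\bigl|(t_2-s)^{\alpha-1}-(t_1-s)^{\alpha-1}\bigr|\,\Delta s + \int_{t_1}^{t_2}(t_2-s)^{\alpha-1}\,\Delta s\right],
\]
and applying Proposition~\ref{P1} to bound each $\Delta$-integral by the corresponding classical Riemann integral reduces the problem to the standard continuity estimate for the Riemann--Liouville integral, which is $O((t_2-t_1)^{\alpha})$, independent of $y$.

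The main obstacle I anticipate is precisely this equicontinuity step, since one must handle the weakly singular kernel $(t-s)^{\alpha-1}$ on an arbitrary time scale; the transfer to the classical integral via Proposition~\ref{P1} is what makes the argument go through. Once uniform boundedness and equicontinuity of $\mathrm{T}(\mathcal{S}(\rho))$ are in hand, Arzel\`a--Ascoli yields relative compactness, and Schauder's fixed point theorem produces a fixed point $y \in \mathcal{S}(\rho)$, which is the desired solution of \eqref{eq1}--\eqref{eq2}.
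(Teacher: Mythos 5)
Your proposal is correct and follows essentially the same route as the paper: Schauder's fixed point theorem applied to the integral operator $\mathrm{T}$, with continuity, uniform boundedness, and equicontinuity established via the kernel decomposition and Proposition~\ref{P1} to compare $\Delta$-integrals with classical integrals, followed by Arzel\`a--Ascoli. The only (minor) difference is that you apply Schauder directly on the invariant closed convex ball $\mathcal{S}(\rho)$, whereas the paper adds a fourth step establishing a priori bounds on the set $\{y : y=\lambda\mathrm{T}(y),\ 0<\lambda<1\}$ in the style of the Leray--Schauder alternative; your version makes that step unnecessary.
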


\begin{proof}
We use Schauder's fixed point theorem \cite{C}
to prove that $\mathrm{T}$ defined by \eqref{eq3}
has a fixed point. The proof is given in several steps.
\emph{Step 1:} $\mathrm{T}$ is continuous.
Let $y_{n}$ be a sequence such that $y_{n}\rightarrow y$
in $\mathcal{C}(\mathcal{J},\mathbb{R})$. Then, for each $t\in\mathcal{J}$,
\begin{equation*}
\begin{split}
|T(y_{n})(t)&-T(y)(t)|\\
&\leq\frac{1}{\Gamma(\alpha)}\int_{t_{0}}^{t}(t-s)^{\alpha-1}
\left|f(s,y_{n}(s))-f(s,y(s))\right|\Delta s\\
&\leq\frac{1}{\Gamma(\alpha)}\int_{t_{0}}^{t}(t-s)^{\alpha-1}
\sup_{s\in\mathcal{J}}\left|f(s,y_{n}(s))-f(s,y(s))\right|\Delta s\\
&\leq\frac{\left\|f(\cdot,y_{n}(\cdot))
-f(\cdot,y(\cdot))\right\|_{\infty}}{\Gamma(\alpha)}
\int_{t_{0}}^{t}(t-s)^{\alpha-1}\Delta s\\
&\leq\frac{\|f(\cdot,y_{n}(\cdot))
-f(\cdot,y(\cdot))\|_{\infty}}{\Gamma(\alpha)}
\int_{t_{0}}^{t}(t-s)^{\alpha-1}d s\\
&\leq\frac{\|f(\cdot,y_{n}(\cdot))
-f(\cdot,y(\cdot))\|_{\infty}}{\Gamma(\alpha)}
\frac{a^{\alpha}}{\alpha}\\
&\leq\frac{a^{\alpha}\left\|f(\cdot,y_{n}(\cdot))
-f(\cdot,y(\cdot))\right\|_{\infty}}{\Gamma(\alpha+1)}.
\end{split}
\end{equation*}
Since $f$ is a continuous function, we have
$$
\left|T(y_{n})(t)-T(y)(t)\right|_{\infty}
\leq\frac{a^{\alpha}}{\Gamma(\alpha+1)}\left\|f(\cdot,y_{n}(\cdot))
-f(\cdot,y(\cdot))\right\|_{\infty}
\rightarrow 0 \ \text{ as } \ n\rightarrow \infty.
$$
\emph{Step 2:}
the map $\mathrm{T}$ sends bounded sets into bounded set
in $\mathcal{C}(\mathcal{J},\mathbb{R})$. Indeed, it is enough
to show that for any $\rho$ there exists a positive constant $l$ such that,
for each
$$
y\in B_{\rho}=\{y\in\mathcal{C}(\mathcal{J},\mathbb{R})
: \|y\|_{\infty}\leq \rho \},
$$
we have $\|\mathrm{T}(y)\|_{\infty}\leq l$.
By hypothesis, for each $t\in\mathcal{J}$ we have
\begin{equation*}
\begin{split}
|\mathrm{T}(y)(t)|&\leq\frac{1}{\Gamma(\alpha)}
\int_{t_{0}}^{t}(t-s)^{\alpha-1}|f(s,y(s))|\Delta s\\
&\leq\frac{M}{\Gamma(\alpha)}\int_{t_{0}}^{t}(t-s)^{\alpha-1}\Delta s\\
&\leq\frac{M}{\Gamma(\alpha)}\int_{t_{0}}^{t}(t-s)^{\alpha-1}ds\\
&\leq\frac{Ma^{\alpha}}{\alpha\Gamma(\alpha)}\\
&=\frac{Ma^{\alpha}}{\Gamma(\alpha+1)}=l.
\end{split}
\end{equation*}
\emph{Step 3:} the map $T$ sends bounded sets into equicontinuous sets of
$\mathcal{C}(\mathcal{J},\mathbb{R})$. Let $t_{1}, t_{2} \in \mathcal{J},
t_{1} < t_{2}$, $B_{\rho}$ be a bounded set of
$\mathcal{C}(\mathcal{J},\mathbb{R})$ as in Step~2,
and $y \in B_{\rho}$. Then,
\begin{equation*}
\begin{split}
|T(y)(t_{2})&-T(y)(t_{1})|\\
&\leq\frac{1}{\Gamma(\alpha)}\left|\int_{t_{0}}^{t_{1}}
(t_{1}-s)^{\alpha-1}f(s,y(s))\Delta s
-\int_{t_{0}}^{t_{2}}(t_{2}-s)^{\alpha-1}f(s,y(s))\Delta s\right|\\
&\leq\frac{1}{\Gamma(\alpha)}\Bigr|\int_{t_{0}}^{t_{1}}((t_{1}-s)^{\alpha-1}
-(t_{2}-s)^{\alpha-1} +(t_{2}-s)^{\alpha-1})f(s,y(s))\Delta s\\
&\qquad\qquad\qquad\qquad\qquad-\int_{t_{0}}^{t_{2}}(t_{2}-s)^{\alpha-1}
f(s,y(s))\Delta s\Bigr|\\
&\leq\frac{M}{\Gamma(\alpha)}\left|\int_{t_{0}}^{t_{1}}((t_{1}-s)^{\alpha-1}
-(t_{2}-s)^{\alpha-1})\Delta s+\int_{t_{1}}^{t_{2}}(t_{2}-s)^{\alpha-1}
\Delta s\right|\\
&\leq\frac{M}{\Gamma(\alpha)}\left|\int_{t_{0}}^{t_{1}}((t_{1}-s)^{\alpha-1}
-(t_{2}-s)^{\alpha-1})d s+\int_{t_{1}}^{t_{2}}(t_{2}-s)^{\alpha-1}d s\right|\\
&\leq\frac{M}{\Gamma(\alpha+1)}[(t_{2}-t_{1})^{\alpha}
+(t_{1}-t_{0})^{\alpha}-(t_{2}-t_{0})^{\alpha}]
+\frac{M}{\Gamma(\alpha+1)}(t_{2}-t_{1})^{\alpha}\\
&=\frac{2M}{\Gamma(\alpha+1)}(t_{2}-t_{1})^{\alpha}
+\frac{M}{\Gamma(\alpha+1)}[(t_{1}-t_{0})^{\alpha}-(t_{2}-t_{0})^{\alpha}].
\end{split}
\end{equation*}
As $t_{1}\rightarrow t_{2}$, the right-hand side of the above inequality tends
to zero. As a consequence of Steps~1 to 3, together with the Arzela--Ascoli
theorem, we conclude that $T : \mathcal{C}(\mathcal{J},\mathbb{R})
\rightarrow \mathcal{C}(\mathcal{J},\mathbb{R})$ is completely continuous.
\emph{Step 4:} a priori bounds. Now it remains to show that the set
$$
\Omega=\{y\in\mathcal{C}(\mathcal{J},\mathbb{R}):
y=\lambda\mathrm{T}(y), 0 <\lambda< 1\}
$$
is bounded. Let $y\in\Omega$. Then $y=\lambda\mathrm{T}(y)$
for some $0 <\lambda< 1$. Thus, for each $t\in\mathcal{J}$, we have
$$
y(t)=\lambda\left[\frac{1}{\Gamma(\alpha)}\int_{t_{0}}^{t}
(t-s)^{\alpha-1}f(s,y(s))\Delta s\right].
$$
We complete this step by considering the estimation in Step~2.
As a consequence of Schauder's fixed point theorem, we conclude
that $\mathrm{T}$ has a fixed point, which is solution
of problem~\eqref{eq1}--\eqref{eq2}.
\end{proof}


\section*{Acknowledgments}

This research is part of first author's Ph.D.,
which is carried out at Sidi Bel Abbes University, Algeria.
It was initiated while Nadia Benkhettou was visiting the
Department of Mathematics of University of Aveiro, Portugal, June of 2014.
The hospitality of the host institution and the financial
support of Sidi Bel Abbes University are here gratefully
acknowledged. Torres was supported by Portuguese funds through the
Center for Research and Development in Mathematics and Applications (CIDMA)
and the Portuguese Foundation for Science and Technology (FCT),
within project UID/MAT/04106/2013. The authors would like
to thank the Reviewers for their comments.



\end{document}